 \newtheorem{thm}{Theorem}[section]
 \newtheorem{cor}[thm]{Corollary}
 \newtheorem{lem}[thm]{Lemma}
 \theoremstyle{definition}
 \newtheorem{defn}[thm]{Definition}
 \theoremstyle{remark}
 \newtheorem{rem}[thm]{\textbf{Remark}}
 \newtheorem{ex}[thm]{\textbf{Example}}
 \numberwithin{equation}{section}
 \DeclareMathOperator*{\Co}{Conv\,}
 \DeclareMathOperator*{\diam}{Diam\,}
\begin{document}
\title[]{Fixed points for $F$-expanding
mappings in the sense of measures of noncompactness}

\author[Y. Touail]{Youssef Touail$^{*,1}$}
\email{y.touail@usms.ma}
\email{youssef9touail@gmail.com}
\thanks{* Corresponding author.}

\author[A. Jaid]{Amine Jaid$^{1}$}
\email{aminejaid1990@gmail.com}

\vspace{1cm}

\address{$^1$Equipe de Recherche en Math\'{e}matiques Appliqu\'{e}es,\br Technologies de l'Information et de la Communication \br
Facult\'{e} Polydisciplinaire de Khouribga, BP. 25000\br
 Universit\'{e} Sultan Moulay Slimane de Beni-Mellal\br
Morocco}

\author[D. El Moutawakil]{Driss El Moutawakil$^2$}
\email{d.elmotawakil@gmail.com}

\vspace{1cm}

\address{$^{2}$ D\'{e}partement de Math\'{e}matiques, ESEF, Universit\'{e} Chouaib Doukkali, El-Jadida, Morocco}
\subjclass{47H09, 47H10, 34A12.}
\keywords{fixed point, $F$-contraction map, $F$-expanding map, measure of noncompactness}

\date{January 1, 2004}
\dedicatory{}
\vspace{1cm}
\begin{abstract}
In this article, we model with measures of noncompactness the well-known concept of $F$-expanding mappings given by Górnicki (Fixed Point Theory Appl 2017, 9 (2016)). Our results are proved by weakening some assumptions on $F$ and without using the surjectivity. Moreover, the paper contains some examples and an application to the Volterra-integral equation.
\end{abstract}
\maketitle

\section{Introduction}
Due to the importance and fruitful applications of Banach contraction principle \cite{banach}, many authors have generalized
this result by considering some classes of nonlinear mappings which are more
general than contraction mappings
(see \cite{1,15,17} and the references therein).
In 2012, Wardowski \cite{war}
introduced a class of mappings called the $F$-contractions defined as follows:
  \begin{defn}(\cite{war})\label{ward}
 Let $F:\mathbb{R}_+\rightarrow\mathbb{R}$ be a mapping satisfying:\\
$(F1)$ $F$ is strictly increasing,\\
$(F2)$ For each sequence $\{\alpha_n\}$ of positive numbers $\lim\alpha_n=0$ if and only if
$\lim F (\alpha_n)=-\infty$.\\
$(F3)$ There exists $k\in (0, 1)$ such that $\lim_{\alpha\rightarrow0^+}\alpha^kF(\alpha) = 0$.\\
A mapping $T:X\rightarrow X$ is said to be $F$-contraction if there exists $\tau >0$ such that for all $x, y \in X$,
\begin{equation}\label{1}
   d(Tx,Ty)>0\implies \tau+F(d(Tx,Ty))\leq F(d(x,y)).
 \end{equation}
 \end{defn}
An important result is proved in \cite{war} as follows:
 \begin{thm}(\cite{war})
 Let $(X, d)$ be a complete metric space and let $T: X\to X$ be an $F$-contraction.
Then $T$ has a unique fixed point $u\in X$ and for every $x_0\in X$ a sequence
$\{T^nx_0\}_{n\in\mathbb{N}}$ is convergent to $u$.
 \end{thm}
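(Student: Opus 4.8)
The plan is to drive a Picard-iteration argument with the three axioms $(F1)$--$(F3)$. I would fix an arbitrary $x_0\in X$, set $x_n=T^nx_0$, and abbreviate $\gamma_n=d(x_n,x_{n+1})$. If some $\gamma_n$ vanishes then $x_n$ is already fixed and there is nothing to prove, so I may assume $\gamma_n>0$ for all $n$. Applying the defining implication \eqref{1} to the pair $(x_{n-1},x_n)$ gives $\tau+F(\gamma_n)\le F(\gamma_{n-1})$; iterating this yields $F(\gamma_n)\le F(\gamma_0)-n\tau$. Sending $n\to\infty$ forces $F(\gamma_n)\to-\infty$, and then $(F2)$ delivers $\gamma_n\to 0$.

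The crucial next step is to promote $\gamma_n\to0$ to summability of $\sum_n\gamma_n$, and this is exactly where $(F3)$ is needed. With $k\in(0,1)$ from $(F3)$, I would multiply $F(\gamma_n)\le F(\gamma_0)-n\tau$ by $\gamma_n^k>0$ to get $0\le n\tau\gamma_n^k\le\gamma_n^k\big(F(\gamma_0)-F(\gamma_n)\big)$. As $n\to\infty$ the right-hand side tends to $0$, since $\gamma_n^kF(\gamma_n)\to0$ by $(F3)$ and $\gamma_n^kF(\gamma_0)\to0$ because $\gamma_n\to0$; hence $n\gamma_n^k\to0$. Consequently $\gamma_n\le n^{-1/k}$ for all large $n$, and since $1/k>1$ a comparison with the convergent $p$-series shows $\sum_n\gamma_n<\infty$.

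From summability I would conclude that $\{x_n\}$ is Cauchy, using $d(x_n,x_m)\le\sum_{j=n}^{m-1}\gamma_j$ for $m>n$, so completeness of $(X,d)$ produces a limit $u=\lim_n x_n$. To see that $u$ is fixed, I first observe that \eqref{1} together with the strict monotonicity in $(F1)$ forces $d(Tx,Ty)\le d(x,y)$ for all $x,y$ (strictly whenever $d(Tx,Ty)>0$), so $T$ is nonexpansive and in particular continuous; therefore $Tu=\lim_n Tx_n=\lim_n x_{n+1}=u$. Uniqueness then follows at once: two distinct fixed points $u,v$ would satisfy $d(Tu,Tv)=d(u,v)>0$, and \eqref{1} would give $\tau+F(d(u,v))\le F(d(u,v))$, contradicting $\tau>0$.

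I expect the main obstacle to be the passage from $\gamma_n\to0$ to summability of the series: the earlier inequalities are formal consequences of $F(\gamma_n)\le F(\gamma_0)-n\tau$, but it is precisely the technical growth condition $(F3)$, combined with the $p$-series comparison, that upgrades shrinking consecutive distances into a genuinely Cauchy sequence. Once that is secured, completeness and the continuity of $T$ close the argument without further difficulty.
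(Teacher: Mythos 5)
Your proof is correct and is essentially the classical argument of Wardowski (the paper only quotes this theorem from \cite{war} without reproducing a proof): Picard iteration, the telescoped inequality $F(\gamma_n)\le F(\gamma_0)-n\tau$, condition $(F2)$ to get $\gamma_n\to0$, condition $(F3)$ to get $n\gamma_n^k\to0$ and hence summability via the $p$-series, completeness for the limit, and the observation that $(F1)$ makes $T$ nonexpansive (hence continuous) to identify the limit as a fixed point, with uniqueness immediate from $\tau>0$. No gaps.
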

  Following this direction of research, in 2017, Górnicki \cite{g} generalized the well-known notion of expanding mappings by introducing the class of maps called $F$-expanding. He provided a remarkable fixed point theorem for this kind of contractions.
  \begin{defn}(\cite{g})\label{def}
  Let $(X,d)$ be a metric space. A mapping $T : X\to X$ is called $F$-expanding
if there exists $\tau> 0$ such that for all $x, y \in X$,
\begin{equation}\label{}
  d(x, y) > 0 \implies \tau+F(d(x, y))\leq F(d(Tx, Ty)).
\end{equation}
\end{defn}
\begin{thm}(\cite{g})\label{G}
Let $(X, d)$ be a complete metric space and $T : X\to X$ be surjective and $F$-expanding.
Then $T$ has a unique fixed point.
\end{thm}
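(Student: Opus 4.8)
The plan is to reduce Theorem \ref{G} to Wardowski's fixed point theorem for $F$-contractions by inverting $T$. The guiding observation is that the $F$-expanding inequality of Definition \ref{def}, read for the pair $(Tx,Ty)$ against $(x,y)$, becomes an $F$-contraction inequality for any inverse of $T$; so the whole argument hinges on producing a single-valued inverse.

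First I would show that an $F$-expanding map is automatically injective, so that surjectivity upgrades to bijectivity. Suppose $Tx=Ty$ with $x\neq y$; then $d(x,y)>0$, and the $F$-expanding inequality in Definition \ref{def} forces $\tau+F(d(x,y))\leq F(d(Tx,Ty))$. Here $d(Tx,Ty)=0$, while $F$ is strictly increasing by $(F1)$ with $F(\alpha)\to-\infty$ as $\alpha\to0^{+}$ by $(F2)$; hence the right-hand side cannot dominate the finite quantity on the left, a contradiction. Thus $x=y$, and together with the assumed surjectivity $T$ is a bijection, so that $S:=T^{-1}:X\to X$ is a well-defined single-valued map.

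Next I would verify that $S$ is an $F$-contraction on the complete space $(X,d)$ with the very same constant $\tau$ and the same $F$. Given $u,v\in X$ with $d(Su,Sv)>0$, set $x=Su$ and $y=Sv$, so that $Tx=u$, $Ty=v$ and $d(x,y)>0$. Applying the $F$-expanding inequality to $x,y$ yields $\tau+F(d(Su,Sv))\leq F(d(u,v))$, which is exactly the defining implication (\ref{1}) for the map $S$. Consequently Wardowski's theorem (\cite{war}) applies to $S$ and produces a unique fixed point $u^{*}\in X$ with $Su^{*}=u^{*}$. Since $Su^{*}=u^{*}$ is equivalent to $u^{*}=Tu^{*}$, the point $u^{*}$ is a fixed point of $T$, and because the fixed point sets of $T$ and of $T^{-1}$ coincide it is the unique one. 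Uniqueness can also be seen directly: two distinct fixed points $u,v$ of $T$ would give $\tau+F(d(u,v))\leq F(d(Tu,Tv))=F(d(u,v))$, contradicting $\tau>0$.

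The step I expect to be the main obstacle is the injectivity argument, since $F$ is defined only on $\mathbb{R}_{+}$ and one must argue carefully about the boundary behaviour of $F$ at $0$ (via $(F2)$) rather than simply evaluating $F(0)$. Once bijectivity is secured, the transfer of the estimate to $T^{-1}$ is a direct substitution, and the conclusion follows immediately from \cite{war}. Notably, this route uses only $(F1)$ and $(F2)$ for the reduction, so the growth condition $(F3)$ enters only through the cited theorem.
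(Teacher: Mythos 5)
Your proof is correct and follows essentially the same route as the cited source \cite{g} and as this paper's own Theorem \ref{main}: pass to an inverse of $T$, check that the expanding inequality turns it into an $F$-contraction, and invoke Wardowski's theorem. The only cosmetic difference is that you first prove injectivity so as to work with the genuine two-sided inverse $T^{-1}$, whereas the paper's Lemma \ref{lem} extracts from surjectivity alone a right-inverse $T'$ with $TT'=\mathrm{id}$, which already suffices for the same substitution argument.
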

 On the other hand, in 1930, Kuratowski \cite{kura} was introduced the notion of measure of noncompactness.
This concept has been used by researchers around the world to establish fixed point results in different abstract
spaces. After then, in 1980, Bana\`{s} and Goebel \cite{ban} announced a new axiomatic approach for measures of noncompactness as follows:
\begin{defn}(\cite{ban})\label{msr} A map $\mu: \mathcal{M}_X\rightarrow [0,+\infty[$ is called measure of non-compactness defined on $X$ if it satisfies the following properties:\\
(1) The family $\ker\mu=\{B\in \mathcal{M}_X: \mu(B)=0\}$ is nonempty and $\ker\mu\subset \mathcal{N}_X$,\\
(2) $A\subset B\Rightarrow\mu(A)\leq\mu(B)$,\\
(3) $\mu(B)=\mu(\overline{B})=\mu(\Co(B))$,\\
(4) $\mu(\lambda A+(1-\lambda)B)\leq\lambda\mu(A)+(1-\lambda)\mu(B)$ for all $\lambda\in[0,1]$ and $A$, $B\in \mathcal{M}_X$,\\
(5) if $\{\Omega_n\}$ is a decreasing sequence of nonempty, closed and bounded subsets of $X$ with $\lim\mu(\Omega_n)=0$, then $\Omega_\infty=\cap_n\Omega_n\neq\emptyset$.
\end{defn}
 In Definition \ref{msr}; $X$ is a Banach space, $\mathcal{M}_X$ denotes the collection of all nonempty and bounded subsets of $X$ and
 $\mathcal{N}_X$ denotes the collection of all relatively compact subsets of $X$. Also, $\overline{B}$ and $\Co(B)$ denote the closure and closed convex hull of $B\subset X$, respectively.\\
  Now, let us recall the well-known Schauder fixed point theorem:
\begin{thm}(\cite{schau})\label{sch} Let $C$ be a nonempty, convex and compact subset
of a Banach space $X$. Then, every continuous mapping $T : C\rightarrow C$ has at least one
fixed point.
\end{thm}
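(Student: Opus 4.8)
The plan is to reduce the infinite-dimensional problem to a sequence of finite-dimensional ones, apply the classical Brouwer fixed point theorem there, and then pass to a limit using compactness and continuity. First I would exploit the compactness of $C$ to manufacture good finite-dimensional approximations: for each $\varepsilon>0$ there is a finite $\varepsilon$-net $\{x_1,\dots,x_n\}\subset C$, meaning the open balls $B(x_i,\varepsilon)$ cover $C$. Using this net I would build the \emph{Schauder projection}
\begin{equation}
P_\varepsilon(x)=\frac{\sum_{i=1}^n m_i(x)\,x_i}{\sum_{i=1}^n m_i(x)},\qquad m_i(x)=\max\{0,\,\varepsilon-\|x-x_i\|\},
\end{equation}
whose denominator never vanishes on $C$ (every $x$ lies within $\varepsilon$ of some $x_i$), which is continuous, maps $C$ into the finite-dimensional convex set $C_\varepsilon:=\Co\{x_1,\dots,x_n\}\subset C$, and satisfies the key estimate $\|P_\varepsilon(x)-x\|<\varepsilon$ for all $x\in C$ (each active term in the convex combination comes from an $x_i$ within distance $\varepsilon$ of $x$).

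Next I would restrict the composition $P_\varepsilon\circ T$ to the set $C_\varepsilon$. Since $T$ maps $C$ into $C$ and $P_\varepsilon$ maps into $C_\varepsilon$, the map $P_\varepsilon\circ T:C_\varepsilon\to C_\varepsilon$ is a continuous self-map of a nonempty, compact, convex subset of a finite-dimensional space. By Brouwer's fixed point theorem it has a fixed point $y_\varepsilon\in C_\varepsilon$, that is, $P_\varepsilon(T(y_\varepsilon))=y_\varepsilon$. Combining this with the projection estimate gives $\|T(y_\varepsilon)-y_\varepsilon\|=\|T(y_\varepsilon)-P_\varepsilon(T(y_\varepsilon))\|<\varepsilon$, so $y_\varepsilon$ is an $\varepsilon$-approximate fixed point of $T$.

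Finally I would let $\varepsilon\to 0$. Choosing $\varepsilon=1/k$ produces a sequence $\{y_k\}\subset C$ with $\|T(y_k)-y_k\|<1/k$. By compactness of $C$, a subsequence $y_{k_j}$ converges to some $u\in C$; continuity of $T$ yields $T(y_{k_j})\to T(u)$, while $\|T(y_{k_j})-y_{k_j}\|\to 0$ forces $T(u)=u$. The main obstacle is not any single step but the careful construction and verification of the Schauder projection: one must check simultaneously that $P_\varepsilon$ is well defined (positive denominator), continuous, has range in a finite-dimensional convex set, and is $\varepsilon$-close to the identity. Granting Brouwer's theorem as the finite-dimensional input, everything else reduces to a routine compactness-and-continuity argument.
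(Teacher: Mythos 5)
Your proof is correct and complete: the finite $\varepsilon$-net, the Schauder projection $P_\varepsilon$ with its four verifications (well-definedness, continuity, range in $\Co\{x_1,\dots,x_n\}\subset C$, and the estimate $\|P_\varepsilon(x)-x\|<\varepsilon$), the application of Brouwer's theorem to $P_\varepsilon\circ T$ on the finite-dimensional compact convex set, and the passage to the limit via approximate fixed points are exactly the classical argument. Note, however, that the paper does not prove this statement at all --- it is quoted as a known result with a citation to Schauder's 1930 paper and used as a black box --- so there is no proof in the paper to compare against; what you have reproduced is the standard textbook proof of Schauder's fixed point theorem.
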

In the present paper, we model Definition \ref{ward}, Definition \ref{def} and Theorem \ref{G} in the sense of measures of noncompactness. Namely, we introduce a novel class of mappings $T:C\rightarrow X$ as follows:
\begin{defn}\label{ndef}
 A mapping $T:C\rightarrow X$ is said to be an $F$-expanding in the sense of measures of noncompactness. If there exist $F\in \mathcal{F}$ and $\tau >0$ such that for all $\Omega\subset C$,
\begin{equation}\label{11}
   \mu(\Omega)>0\implies \tau+F(\mu(\Omega))\leq F(\mu(T\Omega)),
 \end{equation}
 \end{defn}
 where $\mathcal{F}$ is the class of mappings $F:\mathbb{R}_+\rightarrow\mathbb{R}$ only satisfying  $(F1)$ and $(F2)$ compared to hypotheses of Theorem \ref{G}.\\
 After that, we prove some new fixed point results for mappings $T: C\to X$ satisfying the condition \eqref{11} without adding the surjectivity. We point out that this hypothesis is used for obtaining a fixed point result in Theorem \ref{G}.\\ Finally, to show the validity of our results in the setting of measures of noncompactness, we give some usable examples and present an existence result for the Volterra-integral equation.
\section{The results}
In this section, we start with the following examples supporting the motivation
of this paper:
\begin{ex}
 Let $F(t) = \ln t$, then $F$ verifies (F1), (F2) and (F3), and condition
\eqref{11} is of the form:
\begin{equation}\label{}
  \mu(T\Omega) \geq e^\tau\mu(\Omega),
\end{equation}
for all $\Omega\subset C$.
\end{ex}
\begin{ex}
 If $F(t) = \ln t + t, t >0$. Clearly, $F$ satisfies (F1), (F2) and (F3).
Every mapping $T: C\to X$ satisfying \eqref{11} is an $F$-expanding map such that:
\begin{equation}\label{}
  \mu(T\Omega)e^{\mu(T\Omega)-\mu(\Omega)}\geq e^\tau\mu(\Omega),
\end{equation}
for all $\Omega\subset C$.
\end{ex}
\begin{ex}
 Let $F(t) = \ln(t^2 + t)$, with $t > 0$. Thus $F$ satisfies (F1), (F2) and (F3), and for
$F$-expanding T, the following condition holds:
\begin{equation}\label{}
  \mu(T\Omega)\frac{\mu(T\Omega)+1}{\mu(\Omega)+1} \geq e^\tau\mu(\Omega),
\end{equation}
for all $\Omega\subset C$.
\end{ex}
Similar functions are, for example, $F(t) = \ln(t^n)$, $n \in \mathbb{N}$, $t > 0$; $F(t) =\ln(\arctan t)$, $t > 0$ and $F(t)=-\frac{1}{t}$, $t>0$ which does not satisfy the condition (F3).\\
The following technical lemmas are crucial to proving Theorem \ref{main}.
 \begin{lem}\label{fr} Let $C$ be a nonempty bounded, closed and convex subset of a Banach space $X$ and $T:C\rightarrow C$ be a continuous selfmapping. If there exist $F\in \mathcal{F}$ and $\tau >0$ such that for all $\Omega\subset C$,
\begin{equation}\label{}
   \mu(T\Omega)>0\implies \tau+F(\mu(T\Omega))\leq F(\mu(\Omega)).
 \end{equation}
  Then $T$ has at least one fixed point and the set of fixed points of $T$ belongs to $\ker\mu$.
 \end{lem}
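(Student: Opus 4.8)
The plan is to run the classical Darbo iteration, using the $F$-contractive estimate on $\mu$ to drive the measures of the iterated sets down to zero, and then to apply Schauder's theorem (Theorem \ref{sch}) on the resulting compact limit set. First I would construct a decreasing sequence of admissible sets: put $C_0=C$ and define $C_{n+1}=\Co(TC_n)$ for $n\geq 0$. Since $T$ is a selfmap of the closed convex set $C$, each $TC_n\subseteq C$, hence $C_{n+1}=\Co(TC_n)\subseteq C$, and every $C_n$ is nonempty, closed, convex and bounded. An induction using $TC_n\subseteq TC_{n-1}$ and the monotonicity of the closed convex hull gives $C_{n+1}\subseteq C_n$ for all $n$.

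Next I would control the measures. By axiom $(3)$ of Definition \ref{msr} one has $\mu(C_{n+1})=\mu(\Co(TC_n))=\mu(TC_n)$, and by axiom $(2)$ the sequence $\{\mu(C_n)\}$ is nonincreasing. If $\mu(C_N)=0$ for some $N$, then $\mu(C_n)=0$ for all $n\geq N$ and $\mu(C_n)\to 0$ trivially. Otherwise $\mu(C_n)>0$ for every $n$, so $\mu(TC_n)=\mu(C_{n+1})>0$ and the hypothesis applies with $\Omega=C_n$, yielding $\tau+F(\mu(C_{n+1}))\leq F(\mu(C_n))$. Iterating gives $F(\mu(C_n))\leq F(\mu(C_0))-n\tau\to-\infty$, and since $\{\mu(C_n)\}$ is then a sequence of positive reals, property $(F2)$ forces $\mu(C_n)\to 0$.

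I would then pass to the limit set. By axiom $(5)$ the set $C_\infty=\bigcap_n C_n$ is nonempty, and it is closed, convex and bounded; from $\mu(C_\infty)\leq\mu(C_n)\to 0$ we get $\mu(C_\infty)=0$, so $C_\infty\in\ker\mu\subseteq\mathcal{N}_X$, whence $C_\infty$ is compact. Moreover $TC_\infty\subseteq TC_n\subseteq\Co(TC_n)=C_{n+1}$ for every $n$, so $TC_\infty\subseteq C_\infty$ and $T$ restricts to a continuous selfmap of the nonempty compact convex set $C_\infty$; Schauder's theorem then supplies a fixed point. Finally, for the location of the fixed point set, let $\mathrm{Fix}(T)$ be the nonempty bounded set of fixed points. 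Since $T$ fixes each of its points, $T(\mathrm{Fix}(T))=\mathrm{Fix}(T)$, so if $\mu(\mathrm{Fix}(T))>0$ the hypothesis applied to $\Omega=\mathrm{Fix}(T)$ gives $\tau+F(\mu(\mathrm{Fix}(T)))\leq F(\mu(\mathrm{Fix}(T)))$, i.e. $\tau\leq 0$, a contradiction; hence $\mu(\mathrm{Fix}(T))=0$ and $\mathrm{Fix}(T)\in\ker\mu$.

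Most steps are routine once the iteration is set up; the one place demanding care is the dichotomy in the measure estimate, where I must confirm that $\mu(TC_n)>0$ before invoking the $F$-inequality and that $(F2)$ is applied to a genuine sequence of positive numbers, together with the verification that $T$ really maps $C_\infty$ into itself so that Schauder is applicable. Note that the argument never uses $(F3)$ nor surjectivity of $T$, which is precisely the weakening advertised in the introduction.
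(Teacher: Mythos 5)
Your proposal is correct and follows essentially the same route as the paper: the iteration $C_0=C$, $C_{n+1}=\Co(TC_n)$, the estimate $F(\mu(C_n))\leq F(\mu(C_0))-n\tau$ combined with $(F2)$ to get $\mu(C_n)\to 0$, invariance of $C_\infty=\bigcap_n C_n$, and Schauder on the compact convex set $C_\infty$. The only difference is that you add an explicit (and correct) argument that $\mathrm{Fix}(T)\in\ker\mu$ by applying the hypothesis to $\Omega=\mathrm{Fix}(T)$, a point the paper leaves implicit, and you are more careful than the paper in checking that $\mu(TC_n)=\mu(C_{n+1})>0$ before invoking the $F$-inequality.
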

 \begin{proof}
Consider the sequence of sets $\Omega_0=C, \Omega_{n+1}=\Co T\Omega_n$, $n\geq0$.
If there exists $n_0\in\mathbb{N}$ such that $\mu(\Omega_{n_0})=0$, then $\Omega_0$ is compact, so according to Theorem \ref{sch} $T$ has a fixed point in $C$.\\
Assume that $\mu(\Omega_{n})>0$ for all $n\in\mathbb{N}$. In view of
\begin{equation}
F(\mu(\Omega_{n+1}))=F(\mu(\Co(T\Omega_n)))=F(\mu(T\Omega_n))\leq F(\mu(\Omega_n))-\tau,
\end{equation}
we have
\begin{equation}\label{3}
F(\mu(\Omega_n))\leq F(\mu(\Omega_0))-n\tau.
\end{equation}
From (\ref{3}), we obtain $\lim F(\mu(\Omega_n))=-\infty$ that together with $(F2)$ give:
\begin{equation}\label{nn}
\lim\mu(\Omega_n)=0.
\end{equation}
Since $\Omega_{n+1}\subset \Omega_n$ and each $\Omega_n$ is convex, so $\Omega_\infty=\cap_n\Omega_n$ is convex, thus we have
\begin{equation}
 T\Omega_n\subset T\Omega_{n-1}\subset\Co(T\Omega_{n-1})=\Omega_n.
\end{equation}
Then, $T:\Omega_n\rightarrow \Omega_n$ for all $n\geq0$, which implies that $\Omega_\infty$ is invariant under $T$.
From axiom (5) of Definition \ref{msr}, we obtain that the intersection set $\Omega_\infty$  is a member of the kernel
$\ker\mu$. In fact, since $\mu(\Omega_\infty)\leq\mu(\Omega_n)$ for any $n$, \eqref{nn} implies that $\mu(\Omega_\infty) = 0$. This
yields that $\Omega_\infty\in\ker\mu$. Therefore, Theorem \ref{sch} finishes the proof.
\end{proof}
 \begin{lem}\label{lem}
 Let $C\subset X$ and $T : C \to X$ be a mapping such that $C\subset TC$. Then, there exists a mapping $T' : C \to C$ such that
$TT'$ is the identity map on $C$.
 \end{lem}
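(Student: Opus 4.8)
The plan is to build $T'$ as a right inverse of $T$ by selecting, for each point of $C$, a preimage that lies in $C$. First I would unpack the hypothesis: the inclusion $C\subset TC$ says precisely that every $c\in C$ belongs to the image $TC=\{Tx:x\in C\}$. Hence for each fixed $c\in C$ the fiber
\begin{equation}
E_c:=\{x\in C: Tx=c\}
\end{equation}
is a nonempty subset of the domain $C$. The crucial observation is that these fibers are automatically contained in $C$, because $T$ is defined only on $C$; this is exactly what will guarantee that the map we construct takes values in $C$ rather than merely in $X$.

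Next, invoking the axiom of choice, I would choose for each $c\in C$ a representative $x_c\in E_c$ and define $T'(c):=x_c$. Since $x_c\in C$, this yields a well-defined map $T':C\to C$, and by the very definition of $E_c$ we have $T(T'(c))=T(x_c)=c$ for every $c\in C$. Thus $TT'=\mathrm{id}_C$, which is the assertion of the lemma. The verification is immediate once the fibers are known to be nonempty, so the entire content of the proof is concentrated in the preceding paragraph.

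I do not expect a genuine obstacle here: there is no topological or metric input, and in particular no continuity of $T'$ is claimed or needed. The one point worth flagging is that $T$ is not assumed injective, so the fibers $E_c$ may contain several elements and $T'$ is consequently not canonical and need not be a two-sided inverse; we obtain only the one-sided identity $TT'=\mathrm{id}_C$. This is, however, all that is required, and it is precisely the surjectivity-type condition $C\subset TC$ (in place of full surjectivity of $T$) that makes the selection possible, which aligns with the paper's stated aim of dispensing with the surjectivity hypothesis of Theorem \ref{G}.
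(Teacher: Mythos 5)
Your proposal is correct and follows essentially the same route as the paper: for each $c\in C$ the hypothesis $C\subset TC$ yields a preimage in $C$, and a choice of such preimages defines the right inverse $T'$. The paper's proof is just a terser version of the same selection argument (without naming the axiom of choice), so there is nothing further to compare.
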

 \begin{proof}
 Let $x\in C$, we have $C\subset TC$ implies that $Ty_x=x$ for some $y_x\in C$. Let $T'x = y_x$ for all
$x\in C$. Therefore $(TT')x=T(T'x)=Ty_x=x$ for all $x\in C$.
 \end{proof}
The main result of this paper reads as follows:
 \begin{thm}\label{main}
 Let $C$ be a nonempty bounded, closed and convex subset of a Banach space $X$ and $T:C\rightarrow X$ be a continuous $F$-expanding mapping such that $C\subset TC$.
 Then $T$ has at least one fixed point and the set of fixed points of $T$ belongs to $\ker\mu$.
 \end{thm}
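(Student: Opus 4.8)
The plan is to reduce the expanding map $T$ to a contractive self-map by passing to a right inverse, and then to invoke Lemma \ref{fr}. First I would use the hypothesis $C\subset TC$ together with Lemma \ref{lem} to produce a map $T':C\to C$ satisfying $TT'=\mathrm{id}_C$. The point of introducing $T'$ is that it acts as a ``backward'' operator, for which the expanding inequality \eqref{11} for $T$ should transform into exactly the contractive inequality demanded by Lemma \ref{fr}.

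The key computation is the verification that $T'$ meets the hypothesis of Lemma \ref{fr}. Given $\Omega\subset C$, the image $T'\Omega$ again lies in $C$, so whenever $\mu(T'\Omega)>0$ I may apply the $F$-expanding condition \eqref{11} with $T'\Omega$ in the role of $\Omega$. This gives
\begin{equation}
\tau+F(\mu(T'\Omega))\le F(\mu(T(T'\Omega)))=F(\mu(\Omega)),
\end{equation}
where the last equality uses $TT'=\mathrm{id}_C$, so that $T(T'\Omega)=\Omega$. This is precisely the implication $\mu(T'\Omega)>0\implies \tau+F(\mu(T'\Omega))\le F(\mu(\Omega))$ required in Lemma \ref{fr}.

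Applying Lemma \ref{fr} to $T'$ then yields a point $u\in C$ with $T'u=u$, and moreover the fixed-point set of $T'$ belongs to $\ker\mu$. Since $Tu=T(T'u)=(TT')u=u$, this $u$ is also a fixed point of $T$, which settles existence. For the statement about the \emph{whole} fixed-point set of $T$, I would argue directly from \eqref{11}: writing $\Phi=\{x\in C:Tx=x\}$, one checks $T\Phi=\Phi$, so if $\mu(\Phi)>0$ the expanding inequality would force $\tau+F(\mu(\Phi))\le F(\mu(\Phi))$, i.e. $\tau\le 0$, a contradiction; hence $\mu(\Phi)=0$ and $\Phi\in\ker\mu$. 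Note $\Phi$ is nonempty by the existence just proved and bounded as a subset of $C$, so $\mu(\Phi)$ is defined.

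The main obstacle is the continuity of $T'$. Lemma \ref{fr} is stated for a continuous self-map, and its proof terminates with an appeal to Schauder's Theorem \ref{sch}, which cannot dispense with continuity; yet the right inverse furnished by Lemma \ref{lem} is built by an unconstrained pointwise selection $x\mapsto y_x$ and need not be continuous. Closing this gap is the delicate part of the argument: one genuinely needs continuity of $T'$ on the compact convex invariant set produced inside Lemma \ref{fr}, and in the absence of injectivity of $T$ this does not follow automatically. The natural way to guarantee it is to require $T$ to be a homeomorphism onto its image (so that $T'=T^{-1}$ on $C$ is continuous), and I would check whether the continuity of $T$ alone, under the present measure-theoretic hypotheses, already suffices or whether this extra structure must be assumed.
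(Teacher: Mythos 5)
Your proposal follows essentially the same route as the paper: invoke Lemma \ref{lem} to get a right inverse $T'$ with $TT'=\mathrm{id}_C$, transfer the expanding inequality for $T$ into the contractive inequality $\tau+F(\mu(T'\Omega))\le F(\mu(\Omega))$ via $T(T'\Omega)=\Omega$, apply Lemma \ref{fr} to $T'$, and recover a fixed point of $T$ from $Tu=T(T'u)=u$. Your supplementary argument for the fixed-point set is actually an improvement on the paper: Lemma \ref{fr} only controls the fixed points of $T'$, and a fixed point of $T$ need not be a fixed point of $T'$ (the selection $T'u$ may pick a preimage of $u$ other than $u$ itself), so your direct computation $T\Phi=\Phi$ forcing $\tau+F(\mu(\Phi))\le F(\mu(\Phi))$ is the correct way to establish $\Phi\in\ker\mu$, a step the paper omits entirely.

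The continuity obstacle you flag is genuine, and the paper does not resolve it either: its proof applies Lemma \ref{fr} to $T'$ without any justification that $T'$ is continuous, even though Lemma \ref{fr} assumes continuity precisely because it terminates in Schauder's theorem. The right inverse from Lemma \ref{lem} is an arbitrary pointwise selection $x\mapsto y_x$ (in general requiring the axiom of choice) and there is no reason for it to be continuous when $T$ is merely continuous and non-injective. Note also that an $F$-expanding $T$ \emph{is} automatically injective as a point map only if the expanding condition is imposed on the metric, not on the measure of noncompactness; condition \eqref{11} constrains $\mu$ of sets and says nothing about two-point sets, since $\mu(\{x,y\})=0$ for any finite set. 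So you cannot extract injectivity, let alone a continuous inverse, from \eqref{11}. Your instinct is correct: either one must additionally assume that $T$ is a homeomorphism onto its image (or at least admits a continuous right inverse), or Lemma \ref{fr} must be reproved for a possibly discontinuous $T'$, which its present proof cannot support. As written, both your argument and the paper's share this unclosed gap; the difference is that you identified it.
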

 \begin{proof}
   Let $\Omega\subset C$ such that $\mu(T'\Omega)>0$, where $T': C\to C$ is the mapping defined in Lemma \ref{lem}.
   Denote $\Omega'=T'\Omega$, since $T$ is an $F$-expanding mapping, we obtain
   \begin{equation}\label{x}
    \tau+F(\mu(\Omega'))\leq F(\mu(T\Omega')).
 \end{equation}
  Using Lemma \ref{lem}, we get
  \begin{equation}\label{y}
    T\Omega' = TT'\Omega =\Omega.
  \end{equation}
  Therefore, replacing equality \eqref{y} in the inequality \eqref{x}, we achieve
  \begin{equation}\label{1}
    \tau+F(\mu(T'\Omega))\leq F(\mu(\Omega)).
 \end{equation}
 This means that $T'$ is an $F$-contraction mapping in the sense of measures of noncompactness. So, according to Lemma \ref{fr}, $T'$ has fixed point $u\in C$.
The element $u$ is also a fixed point of $T$ because $T'u = u$ implies that $Tu = T(T'u) = u$.
 \end{proof}
 \begin{ex}\label{example}
 Let $X=\mathbb{R}$ and $C=[0,1]$, define the mapping $T: C\to X$ by
 \begin{equation}\label{}
   Tx=e^{x+1}+ex-e,
 \end{equation}
 for all $x\in[0,1]$. The mapping $T$ satisfies the inequality
 \begin{equation}\label{}
   |Tx-Ty|\geq e|x-y|,
 \end{equation}
 for all $x,y\in [0,1]$.\\
 Let $\mu :\mathcal{M}_X \to \mathbb{R}^+$ the measure of noncompactness of the diameter (see \cite{ban}) defined by
\begin{equation}\label{}
  \mu(\Omega) = \diam (\Omega):= sup_{x,y\in\Omega}|x-y|.
\end{equation}
Thus
 \begin{equation}\label{}
       \mu(T\Omega)\geq e\mu(\Omega),
     \end{equation}
for all $\Omega\subset C$.\\
Therefore, $T$ is an $F$-expanding mapping with $\tau=1$ and $F(t)=\ln t, t>0$.\\
Furthermore, we have
\begin{equation}\label{}
  [0,1]\subset T([0,1])=[0,e^2].
\end{equation}
 Therefore all conditions of Theorem \ref{main} are satisfied and $0=T0$.
 \end{ex}
 \begin{rem}
As a comparison with Theorem \ref{G}, Example \ref{example} shows the importance of the proven Theorem \ref{main}. We note that the surjectivity is omitted.
 \end{rem}
Using Theorem \ref{main}, we establish the following result to the classical metric
fixed point theory.
\begin{cor}
Let $C$ be a nonempty, bounded, closed and convex subset of a Banach
space $X$ and suppose that $T : C \to X$ is a continuous
mapping such that $C\subset TC$. If there exists $\tau>0$ and $F\in\mathcal{F}$ such that
\begin{equation}\label{kk}
\tau + F(||x-y||\leq F(||Tx-Ty||),
\end{equation}
for all $\Omega\subset C$ and $x,y\in\Omega$.
Then $T$ has a fixed point in $C$.
\end{cor}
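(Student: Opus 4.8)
The plan is to deduce this corollary from Theorem~\ref{main} by running it with the diameter as the underlying measure of noncompactness. Concretely, I would set $\mu(\Omega)=\diam(\Omega)=\sup_{x,y\in\Omega}\|x-y\|$, exactly as in Example~\ref{example}, and first confirm that this $\mu$ fulfils Definition~\ref{msr}: its kernel consists of the singletons, which are relatively compact, so (1) holds; monotonicity (2) is clear; the identities $\diam(\Omega)=\diam(\overline{\Omega})=\diam(\Co(\Omega))$ give (3); property (4) follows from $\|\lambda(a_1-a_2)+(1-\lambda)(b_1-b_2)\|\le\lambda\|a_1-a_2\|+(1-\lambda)\|b_1-b_2\|$; and (5) is Cantor's intersection theorem in the complete space $X$, since a nested family of nonempty closed sets with diameters tending to $0$ has a one-point intersection. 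With $\mu$ fixed, even the sharper assertion that the fixed-point set lies in $\ker\mu$ comes for free from Theorem~\ref{main}.

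The core of the argument is to upgrade the pointwise hypothesis \eqref{kk} to the set-level condition \eqref{11} for this $\mu$; note that \eqref{kk} quantified over all $\Omega\subset C$ and all $x,y\in\Omega$ is simply the requirement that it hold for every $x,y\in C$. Fix $\Omega\subset C$ with $\mu(\Omega)>0$. For each pair $x,y\in\Omega$, combining \eqref{kk} with $(F1)$ and $\|Tx-Ty\|\le\mu(T\Omega)$ gives
\begin{equation}
\tau+F(\|x-y\|)\le F(\|Tx-Ty\|)\le F(\mu(T\Omega)).
\end{equation}
I would then select $(x_n,y_n)$ in $\Omega$ with $\|x_n-y_n\|\to\mu(\Omega)$ and take the limit to reach $\tau+F(\mu(\Omega))\le F(\mu(T\Omega))$, which is exactly \eqref{11}. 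Since $T$ is continuous and $C\subset TC$, Theorem~\ref{main} then applies directly and produces a fixed point of $T$ in $C$.

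The step I expect to be the main obstacle is justifying $\sup_{x,y\in\Omega}F(\|x-y\|)=F(\mu(\Omega))$ in the limit passage. As $F$ is assumed to satisfy only $(F1)$ and $(F2)$, it is increasing but possibly discontinuous, so one gets for free only $\sup_{x,y\in\Omega}F(\|x-y\|)=F(\mu(\Omega)^-)\le F(\mu(\Omega))$; a jump of $F$ precisely at the value $\mu(\Omega)$, together with a diameter that is not attained, would make the inequality fail in the needed direction. I would resolve this by using the left-continuity of the monotone $F$ at $\mu(\Omega)$, or equivalently by ensuring the diameter is attained (which holds when $\overline{\Omega}$ is compact, and in particular in Example~\ref{example} with $C=[0,1]$); passing to $\overline{\Omega}$ via property (3) of $\mu$ is the first device I would try. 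Once this continuity point is settled, the remainder is a routine supremum estimate and the conclusion follows immediately from Theorem~\ref{main}.
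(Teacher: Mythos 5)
Your overall route is exactly the paper's: take $\mu=\diam$ and feed the pointwise hypothesis \eqref{kk} into Theorem~\ref{main}. The paper's own proof is a single line that simply asserts the passage from \eqref{kk} to the set-level condition \eqref{11}; you are more careful, and the one delicate step you isolate --- whether $\sup_{x,y\in\Omega}F(\|x-y\|)$ equals $F(\diam\Omega)$ --- is indeed the only nontrivial content of the argument. Be aware, though, that neither of your proposed repairs closes it in the stated generality: a member of $\mathcal{F}$ is only required to satisfy (F1)--(F2), so it may have a jump exactly at $\diam\Omega$, and passing to $\overline{\Omega}$ does not force the diameter to be attained, since closed bounded subsets of an infinite-dimensional Banach space need not be compact. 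What the hypothesis yields for free is only $\tau+F\bigl((\diam\Omega)^{-}\bigr)\le F(\diam (T\Omega))$. This residual gap is present, silently, in the paper's one-line proof as well; it disappears for every concrete $F$ the paper exhibits (all of which are continuous), or under an added left-continuity assumption on $F$, so as written your account is at least as rigorous as the published one. Your preliminary verification that $\diam$ satisfies Definition~\ref{msr} (Cantor's intersection theorem for axiom (5), and so on) is correct; the authors take this for granted from \cite{ban}.
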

\begin{proof}

Applying the definition of $\mu=\diam$ to \eqref{kk}, we obtain
\begin{equation}\label{}
\tau+F(\mu(\Omega))\leq\mu(F(T\Omega)),
\end{equation}
for all $\Omega\subset C$.\\
So, from Theorem \ref{main} we get the desired result.
\end{proof}
\begin{rem}
The above theorem illustrates the situation where a measure of noncompactness explicitly derives from a norm.
\end{rem}

\section{Application}
In this section, we study the existence of solutions for Volterra-integral equation. For this purpose, let $X=\mathcal{C}([0,1],\mathbb{R})$ the space of all continuous functions from $[0,1]$ into $\mathbb{R}$. Denote $C$ the space of all functions $x$ belonging to $X$ such that for all $t\in[0,1]$, $|x(t)|\leq\xi$ for some $\xi>0$. Note that $X$ is a Banach space by considering the standard norm:
\begin{equation}
 ||x|| = \max_{t\in[0,1]}|x(t)|.
\end{equation}
Consider the Volterra-integral equation:
\begin{align}\label{4}
x(t) &= \int_{0}^{t}k(s,x(s))ds,
\end{align}
where $x\in C$ and $k:[0,1]\times \mathbb{R}\rightarrow \mathbb{R}$ is a continuous mapping. Similar works in this direction can be found in \cite{t1,t2,orth,t3,t4,t5}.\\
Define the operator $T: C\to X$ as follows:
\begin{equation}
  T(x)(t)=\int_{0}^{t}k(s,x(s))ds.
\end{equation}
Then, (\ref{4}) has a solution if and only if $T$ has at least one fixed point in $C$.\\
Under the above assumptions, we are ready to assert the following theorem:
\begin{thm}
If there exists $\tau>0$ such that
\begin{equation}\label{p}
  |x(t)|<\frac{1}{\tau},
\end{equation}
 and
\begin{equation}\label{i}
  x(t)\ne0\implies|T(x)(t)|\geq \frac{|x(t)|}{1-\tau|x(t)|},
  \end{equation}
for all $\Omega\subset C$, $x\in \Omega$ and $t\in[0,1]$. Then, the nonlinear integral equation \eqref{4} has a solution in $C$.
\end{thm}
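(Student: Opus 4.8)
The plan is to realize the operator $T$ as an $F$-expanding mapping in the sense of Definition \ref{ndef} and then invoke Theorem \ref{main}. The natural choice here is $F(t)=-\tfrac{1}{t}$, which belongs to $\mathcal{F}$: it is strictly increasing, so $(F1)$ holds, and $\alpha_n\to 0^+\iff -1/\alpha_n\to-\infty$, so $(F2)$ holds. As the text already noted, this $F$ fails $(F3)$, which is precisely why the weakened class $\mathcal{F}$ (only $(F1)$ and $(F2)$) is the right setting. For this $F$, the defining inequality \eqref{11} rearranges to
\[
\mu(T\Omega)\ge\frac{\mu(\Omega)}{1-\tau\,\mu(\Omega)},
\]
valid whenever $\tau\,\mu(\Omega)<1$, which is what conditions \eqref{p} and \eqref{i} are designed to produce.

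Because \eqref{i} is a condition on a single function $x$ (rather than on a pair $x,y$), the matching measure of noncompactness is the radius-type functional $\mu(\Omega)=\sup_{x\in\Omega}\|x\|=\sup_{x\in\Omega}\max_{t\in[0,1]}|x(t)|$ rather than the diameter of Example \ref{example}. I would first check that this $\mu$ satisfies axioms (1)--(5) of Definition \ref{msr}: monotonicity, invariance under closure and convex hull (the norm is convex), and the convexity estimate (4) are immediate, while for (5) one notes that if $\mu(\Omega_n)\to 0$ along a nested closed sequence, then any selection $x_n\in\Omega_n$ satisfies $\|x_n\|\le\mu(\Omega_n)\to 0$, so $x_n\to 0$ and, by closedness, $0\in\bigcap_n\Omega_n$. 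Next I would verify the structural hypotheses of Theorem \ref{main}: $C$ is nonempty (it contains $0$), bounded by $\xi$, convex and closed (a uniform limit of continuous functions bounded by $\xi$ is again such), and $T$ is continuous on $C$ since $k$ is continuous, hence uniformly continuous, on the compact set $[0,1]\times[-\xi,\xi]$.

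The core computation is the $F$-expanding estimate. Fix $\Omega\subset C$ with $\mu(\Omega)>0$. By \eqref{p} we have $|x(t)|<1/\tau$, so the function $g(r)=\tfrac{r}{1-\tau r}$ is well defined and strictly increasing on $[0,1/\tau)$, and \eqref{i} reads $|T(x)(t)|\ge g(|x(t)|)$ whenever $x(t)\ne 0$. Taking the supremum over $x\in\Omega$ and $t\in[0,1]$ and using the monotonicity and continuity of $g$ gives
\[
\mu(T\Omega)=\sup_{x,t}|T(x)(t)|\ge\sup_{x,t} g(|x(t)|)=g\!\left(\sup_{x,t}|x(t)|\right)=\frac{\mu(\Omega)}{1-\tau\,\mu(\Omega)},
\]
which is exactly inequality \eqref{11} for $F(t)=-1/t$. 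Hence $T$ is an $F$-expanding mapping in the sense of measures of noncompactness, and in particular $\mu(T\Omega)>0$, so the implication in \eqref{11} is vacuously or genuinely satisfied in all cases.

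It then remains to verify $C\subset TC$ and to conclude, via Theorem \ref{main}, that $T$ has a fixed point $u\in C$, i.e.\ a solution of \eqref{4}. I expect the containment $C\subset TC$ to be the main obstacle: it is \emph{not} implied by \eqref{p}--\eqref{i} alone, since finding $y\in C$ with $Ty=x$ amounts to solving $x(t)=\int_0^t k(s,y(s))\,ds$, equivalently $x'(t)=k(t,y(t))$ with $x(0)=0$, which requires inverting $k(t,\cdot)$ in its second variable. Thus this hypothesis must either be imposed as a standing assumption on $T$ (as in Theorem \ref{main}) or be derived from additional structural properties of $k$ combined with the expanding estimate above; all the remaining work reduces to the monotonicity argument and the routine continuity and closedness checks.
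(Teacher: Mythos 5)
Your proposal follows essentially the same route as the paper: choose $F(t)=-\tfrac{1}{t}$ (which satisfies $(F1)$ and $(F2)$ but not $(F3)$, hence the need for the weakened class $\mathcal{F}$), take the sup-norm functional $\mu(\Omega)=\sup_{x\in\Omega}\|x\|$, rearrange \eqref{i} into $\tau+F(|x(t)|)\leq F(|T(x)(t)|)$, pass to suprema using the monotonicity and continuity of $F$, and invoke Theorem \ref{main}. Your verification of axioms (1)--(5) for this $\mu$ and of the continuity of $T$ is more careful than the paper, which simply cites \cite{ban} and asserts these facts.

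The obstacle you flag at the end is genuine and is not resolved by the paper either: Theorem \ref{main} requires $C\subset TC$, and the paper's proof of this application never verifies (nor assumes) that inclusion. As you observe, $C\subset TC$ amounts to solving $x(t)=\int_0^t k(s,y(s))\,ds$ for $y\in C$ given an arbitrary $x\in C$, which already fails for any $x$ with $x(0)\neq 0$ since every function in the range of $T$ vanishes at $t=0$; so the inclusion is in fact \emph{false} as stated unless $C$ is redefined or further hypotheses on $k$ are imposed. Your instinct that this hypothesis must be added or derived is correct, and it identifies a real gap in the paper's own argument rather than in yours.
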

\begin{proof}
Let $\Omega\subset C$, $x\in\Omega$ and $t\in[0,1]$. Suppose that \eqref{p} and \eqref{i} are fulfilled for some $\tau>0$.\\
   Then, we obtain
\begin{equation}\label{}
  \tau-\frac{1}{|x(t)|}\leq-\frac{1}{|T(t)|},
\end{equation}
which implies that
\begin{equation}\label{}
  \tau+F(|x(t)|)\leq F(|T(t)|),
\end{equation}
where $F(t)=-\frac{1}{t}$ for all $t>0$.\\
Taking the measure of noncompactness of the norm (see \cite{ban}):
  \begin{equation}\label{norm}
    \mu(\Omega):=\sup_{x\in\Omega}||x||,
  \end{equation}
  then, we get
  \begin{equation}\label{}
  \tau+F(\mu(\Omega))\leq F(\mu (T\Omega)).
\end{equation}
This means that $T$ is an $F$-expanding mapping in the sense of measures of noncompactness. According to Theorem \ref{main}, we deduce that (\ref{4}) has a solution in $C$.
\end{proof}

\section*{Data Availability}
No data were used to support this study.
\section*{Conflicts of Interest}
The authors declare that they have no conflicts of interest.

\end{document}